\numberwithin{equation}{section} \theoremstyle{plain}
\newtheorem{theorem}{Theorem}[section]
\newtheorem{lemma}[theorem]{Lemma}
\newtheorem{corollary}[theorem]{Corollary}
\newtheorem{definition}[theorem]{Definition}
\theoremstyle{definition}
\theoremstyle{remark}
\numberwithin{equation}{section}
\newcommand{\E}{\mathcal{E}}
\newcommand{\even}{\operatorname{even}}
\newcommand{\cyl}{\operatorname{cyl}}
\newcommand{\odd}{\operatorname{odd}}
\newcommand{\Dim}{\operatorname{dim}}
\newcommand{\Mod}{\operatorname{mod}}
\newcommand{\rel}{\operatorname{rel}}
\newcommand{\Ker}{\operatorname{ker}}
\newcommand{\Tr}{\operatorname{Tr}}
\newcommand{\Imm}{\operatorname{Im}}
\newcommand{\Dom}{\operatorname{Dom}}
\newcommand{\Sign}{\operatorname{sign}}
\newcommand{\ddet}{\operatorname{det}}
\begin{document}

\title[Lefschetz fixed point formula for some boundary conditions]
{Lefschetz fixed point formula on a compact Riemannian manifold with boundary for some boundary conditions}
\author{Rung-Tzung Huang}

\address{Department of Mathematics, National Central University, Chung-Li 320, Taiwan, Republic of China}

\email{rthuang@math.ncu.edu.tw}

\author{Yoonweon Lee}

\address{Department of Mathematics, Inha University, Incheon, 402-751, Korea}

\email{yoonweon@inha.ac.kr}

\subjclass[2000]{Primary: 58J20; Secondary: 14F40}
\keywords{Lefschetz fixed point formula, simple fixed point, heat kernel, de Rham cohomology}
\thanks{The first author was supported by the National Science Council, Republic of China with the grant number MOST 102-2115-M-008-005 and
the second author was supported by the National Research Foundation of Korea with the Grant number NRF-2012R1A1A2001086.}

\begin{abstract}
In [8] the authors introduced a pair of new de Rham complexes on a compact oriented Riemannian manifold with boundary
by using a pair of new boundary conditions
to discuss the refined analytic torsion on a compact manifold with boundary.
In this paper we discuss the Lefschetz fixed point formula on these complexes with respect to a smooth map
having simple fixed points and satisfying some special condition near the boundary.
For this purpose we are going to use the heat kernel method for the Lefschetz fixed point formula.
\end{abstract}
\maketitle

\section{Introduction}

\vspace{0.2 cm}

Let $(M, Y, g^{M})$ be an $m$-dimensional compact oriented Riemannian manifold with boundary $Y$ and $f : M \rightarrow M$ be a smooth map
such that $f(Y) \subset Y$.
A point $p \in M$ is said to be a simple fixed point of $f$ if

\begin{eqnarray}    \label{E:1.1}
f(p) = p, \qquad \ddet \left( I - df(p) \right) \neq 0.
\end{eqnarray}

\noindent
If $p$ is a simple fixed point, the graph of $f$ is transverse to the diagonal of $M \times M$ at $(p, p)$,
which implies that simple fixed points are discrete.
All through this paper we assume that all fixed points of $f$ are simple and hence $f$ has only finitely many fixed points.
For fixed points on the boundary $Y$, we need one more structure.
Let $f(x_{0}) = x_{0}$ with $x_{0} \in Y$. Then
$df(x_{0}) : T_{x_{0}}M \rightarrow T_{x_{0}}M$ induces a map $df_{Y}(x_{0}) : T_{x_{0}}Y \rightarrow T_{x_{0}}Y$.
We consider

\begin{eqnarray*}
a_{x_{0}} & = & df(x_{0}) (\Mod T_{x_{0}}Y) : T_{x_{0}} M / T_{x_{0}} Y  \rightarrow T_{x_{0}} M / T_{x_{0}} Y.
\end{eqnarray*}

\noindent
Since the quotient space $T_{x_{0}} M / T_{x_{0}} Y$ is one-dimensional, the map $a_{x_{0}}$ is simply multiplication by a number,
which we denote by $a_{x_{0}}$ again.
It's not difficult to see that $a_{x_{0}} \geq 0$ by considering the quotient space $T_{x_{0}} M / T_{x_{0}} Y$
as a normal half-line pointing inward at the boundary point $x_{0}$.
Moreover, since the fixed point $x_{0}$ is simple, $a_{x_{0}} \neq 1$ (see [5] for details).

\begin{definition}
(1) A simple boundary fixed point $x_{0} \in Y$ is called {\it attracting} if $a_{x_{0}} < 1$ and {\it repelling} if $a_{x_{0}} > 1$.  \newline
(2) We denote by ${\mathcal F}_{0}(f)$, ${\mathcal F}^{+}_{Y}(f)$ and ${\mathcal F}^{-}_{Y}(f)$
the set of all simple fixed points in the interior of $M$, the attracting fixed points in $Y$
and the repelling fixed points in $Y$, respectively. We denote ${\mathcal F}_{Y}(f) := {\mathcal F}^{+}_{Y}(f) \cup {\mathcal F}^{-}_{Y}(f)$
and ${\mathcal F}(f) := {\mathcal F}_{0}(f) \cup {\mathcal F}_{Y}(f)$.
\end{definition}

\noindent
A. V. Brenner and M. A. Shubin proved the following result in [5].

\begin{eqnarray}  \label{E:1.2}
\sum_{q=0}^{m} (-1)^{q} \Tr \left( f^{\ast} : H^{q}(M) \rightarrow H^{q}(M) \right) & = &
\sum_{p \in {\mathcal F}_{0}(f) \cup {\mathcal F}^{+}_{Y}(f)} \Sign \ddet \left( I - df(p) \right),  \nonumber  \\
\sum_{q=0}^{m} (-1)^{q} \Tr \left( f^{\ast} : H^{q}(M, Y) \rightarrow H^{q}(M, Y) \right) & = &
\sum_{p \in {\mathcal F}_{0}(f) \cup {\mathcal F}^{-}_{Y}(f)} \Sign \ddet \left( I - df(p) \right).
\end{eqnarray}

\noindent
This result extends the Atiyah-Bott-Lefschetz fixed point formula proven on a closed manifold in [1].

On the other hand, the authors introduced new de Rham complexes $\left( \Omega^{\bullet}_{{\widetilde {\mathcal P}}_{0}}(M), d \right)$ and
$\left( \Omega^{\bullet}_{{\widetilde {\mathcal P}}_{1}}(M), d \right)$ by using some boundary
conditions ${\widetilde {\mathcal P}}_{0}$ and ${\widetilde {\mathcal P}}_{1}$, which compute
$H^{q} \left( \Omega^{\bullet}_{{\widetilde {\mathcal P}}_{0}}(M), d \right) =
\begin{cases} H^{q}(M, Y) & \text{if} \quad q = \even  \\ H^{q}(M) & \text{if} \quad q = \odd \end{cases}$ and
$H^{q} \left( \Omega^{\bullet}_{{\widetilde {\mathcal P}}_{1}}(M), d \right) =
\begin{cases} H^{q}(M) & \text{if} \quad q = \even  \\ H^{q}(M, Y) & \text{if} \quad q = \odd \end{cases}$.
In this paper, we are going to discuss the Lefschetz fixed point formula on these complexes.
More precisely, when $f : M \rightarrow M$ is a smooth map having simple fixed points and satisfying some special condition near the boundary $Y$
(see Definition \ref{Definition:3.1}), we are going to describe

\begin{eqnarray*}
& & \sum_{q=\even} \Tr \left( f^{\ast} : H^{q}(M, Y) \rightarrow H^{q}(M, Y) \right) \hspace{0.1 cm} - \hspace{0.1 cm}
\sum_{q=\odd} \Tr \left( f^{\ast} : H^{q}(M) \rightarrow H^{q}(M) \right) \qquad  \text{and}  \\
& & \sum_{q=\even} \Tr \left( f^{\ast} : H^{q}(M) \rightarrow H^{q}(M) \right)  \hspace{0.1 cm} - \hspace{0.1 cm}
\sum_{q=\odd} \Tr \left( f^{\ast} : H^{q}(M, Y) \rightarrow H^{q}(M, Y) \right)
\end{eqnarray*}

\noindent
in terms of fixed points of $f$ and some additional data (see Theorem \ref{Theorem:3.3} below).
For this purpose, we are going to use the heat kernel method for the Lefschetz fixed point formula (cf. [3], [6]).

\vspace{0.2 cm}


\section{de Rham complex $( \Omega^{\bullet}_{{\widetilde {\mathcal P}}_{0}/{\widetilde {\mathcal P}}_{1}}(M), d )$ on a compact
Riemannian manifold with boundary}

\vspace{0.2 cm}

In this section we are going to introduce the
de Rham complex $( \Omega^{\bullet}_{{\widetilde {\mathcal P}}_{0}/{\widetilde {\mathcal P}}_{1}}(M), d )$ on a compact Riemannian manifold with boundary by using the boundary condition
${\widetilde {\mathcal P}}_{0}/{\widetilde {\mathcal P}}_{1}$.
We recall that $(M, Y, g^{M})$ is an $m$-dimensional compact oriented Riemannian manifold with boundary $Y$.
From now on, we assume that $g^{M}$ is a product metric near the boundary $Y$.
We denote by $d^{Y}_{q} : \Omega^{q}(Y) \rightarrow \Omega^{q+1}(Y)$ the de Rham operator induced from $d : \Omega^{q}(M) \rightarrow \Omega^{q+1}(M)$
and denote by $\star_{Y} : \Omega^{q}(Y) \rightarrow \Omega^{m-1-q}(Y)$ the Hodge star operator on $Y$ induced from the Hodge star operator $\star_{M}$ on $M$.
Then the formal adjoint $(d^{Y}_{q})^{\ast}$ of $d^{Y}_{q}$ is defined in the usual way.
We denote $\Delta_{Y}^{q} := (d^{Y}_{q})^{\ast} d^{Y}_{q} + d^{Y}_{q-1} (d^{Y}_{q-1})^{\ast}$ and ${\mathcal H}^{q}(Y) := \Ker \Delta_{Y}^{q}$.
By the Hodge decomposition, we have

\begin{eqnarray*}
\Omega^{q}(Y) & = & \Imm d^{Y}_{q-1} \oplus {\mathcal H}^{q}(Y) \oplus \Imm (d^{Y}_{q})^{\ast}
\end{eqnarray*}

\noindent
Let $N$ be a collar neighborhood of $Y$ which is isometric to $[0,1) \times Y$
and $u$ be the coordinate normal to the boundary $Y$ on $N$.
If $d \phi =  d^\ast \phi = 0$ for $\phi \in \Omega^{q}(M)$, simple computation shows that $\phi$ is expressed
on the boundary $Y$ by

\begin{equation} \label{E:2.1}
\phi|_{Y} = \left( d^{Y} \varphi_{1} + \varphi_{2} \right) + du \wedge \left(  d^{Y \ast}  \psi_{1} + \psi_{2} \right), \quad
\varphi_{1}, \hspace{0.1 cm} \psi_{1} \in \Omega^{\bullet}(Y),
\quad \varphi_{2}, \hspace{0.1 cm} \psi_{2} \in {\mathcal H}^{\bullet}(Y).
\end{equation}

\noindent
In other words, $\varphi_{2}$ and $\psi_{2}$ are harmonic parts of $\iota^{\ast} \phi$ and $\star_{Y} \iota^{\ast} ( \star_{M} \phi )$ up to sign,
where $ \iota : Y \rightarrow M$ is the natural inclusion.
We denote ${\mathcal K}^{q}$ and ${\mathcal K}$ by

\begin{equation} \label{E:2.2}
{\mathcal K}^{q} := \{ \varphi_{2} \in {\mathcal H}^{q}(Y) \mid d \phi =  d^\ast \phi = 0 \},  \qquad
{\mathcal K}  := \oplus_{q=0}^{m-1} {\mathcal K}^{q},
\end{equation}

\noindent
where $\phi$ has the form (\ref{E:2.1}).
If $d \phi =  d^\ast \phi = 0$ for $\phi \in \Omega^{q}(M)$,
 $d (\star_{M} \phi) =  d^\ast (\star_{M} \phi) = 0$, which implies that

\begin{equation} \label{E:2.3}
 \star_{Y} {\mathcal K}^{m-q} = \{ \psi_{2} \in {\mathcal H}^{q-1}(Y) \mid d \phi = d^\ast \phi = 0 \},
\end{equation}

\noindent
where $\phi$ has the form (\ref{E:2.1}).
We have the following lemma, whose proof we refer to Lemma 2.4 in [8].

\vspace{0.2 cm}

\begin{lemma} \label{Lemma:2.1}
 ${\mathcal K}$ is orthogonal to $\star_{Y} \mathcal{K}$ and
${\mathcal K} \oplus (\star_{Y} {\mathcal K}) = {\mathcal H}^{\bullet}(Y)$.
\end{lemma}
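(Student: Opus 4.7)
The plan is to split the claim into an orthogonality part, handled by Stokes' theorem on $M$, and a spanning part, handled by a Poincaré--Lefschetz duality argument on the pair $(M,Y)$.

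\smallskip

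\noindent\emph{Orthogonality.} Fix $q$, and take $\alpha \in \mathcal{K}^{q}$ and $\gamma \in \mathcal{K}^{m-q-1}$, so $\star_{Y}\gamma \in \mathcal{H}^{q}(Y)$. By (\ref{E:2.2}) there exist harmonic fields $\phi_{1} \in \Omega^{q}(M)$ and $\phi_{2} \in \Omega^{m-q-1}(M)$ whose boundary restrictions take the form $\iota^{\ast}\phi_{1} = d^{Y}\varphi_{1} + \alpha$ and $\iota^{\ast}\phi_{2} = d^{Y}\varphi_{1}' + \gamma$ as in (\ref{E:2.1}). Since $\phi_{1} \wedge \phi_{2}$ is closed on $M$, Stokes yields $\int_{Y}\iota^{\ast}\phi_{1} \wedge \iota^{\ast}\phi_{2} = 0$. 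Expanding the integrand and discarding the three cross-terms involving $d^{Y}\varphi_{1}$ or $d^{Y}\varphi_{1}'$ (each integrates to zero by Stokes on the closed manifold $Y$, using $d^{Y}\alpha = d^{Y}\gamma = 0$), one obtains $\int_{Y}\alpha\wedge\gamma=0$, which up to the sign $\star_{Y}^{2} = \pm 1$ equals $\langle \alpha, \star_{Y}\gamma\rangle_{L^{2}(Y)}$. Hence $\mathcal{K} \perp \star_{Y}\mathcal{K}$.

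\smallskip

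\noindent\emph{Spanning.} I would first identify $\mathcal{K}^{q}$, viewed as a subspace of $\mathcal{H}^{q}(Y) \cong H^{q}(Y)$, with the image of the restriction map $\iota^{\ast}: H^{q}(M) \to H^{q}(Y)$. The inclusion $\mathcal{K}^{q} \subseteq \iota^{\ast}H^{q}(M)$ is immediate: any harmonic field $\phi$ is closed, $\iota^{\ast}\phi$ is $d^{Y}$-closed on $Y$, and its Hodge-harmonic part is precisely $\varphi_{2}$; the class of $\iota^{\ast}\phi$ is $\iota^{\ast}[\phi]$. Conversely, every class in $H^{q}(M)$ admits an absolute harmonic representative, which is a harmonic field (both $d$- and $d^{\ast}$-closed) in the sense of (\ref{E:2.2}) whose $\varphi_{2}$ realises the given restriction class. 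Granted this identification, the remaining task is to show $\dim \iota^{\ast}H^{q}(M) + \dim \iota^{\ast}H^{m-q-1}(M) = \dim H^{q}(Y)$, i.e.\ that these two subspaces are mutual annihilators under the Poincaré pairing on $Y$. This is standard: for closed $\omega \in \Omega^{q}(M)$ and closed $\eta \in \Omega^{m-q-1}(Y)$, Stokes applied to $\omega\wedge\tilde\eta$ (with $\tilde\eta$ any smooth extension of $\eta$ to $M$) gives $\int_{Y}\iota^{\ast}\omega\wedge\eta = \int_{M}\omega\wedge d\tilde\eta = \int_{M}\omega \wedge \delta\eta$, where $\delta : H^{m-q-1}(Y) \to H^{m-q}(M,Y)$ is the connecting homomorphism. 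Non-degeneracy of the Poincaré--Lefschetz pairing $H^{q}(M) \times H^{m-q}(M,Y) \to \mathbb{R}$ shows that $\eta$ annihilates $\iota^{\ast}H^{q}(M)$ iff $\delta\eta = 0$, and by exactness of the long exact sequence of $(M,Y)$ this is iff $\eta \in \iota^{\ast}H^{m-q-1}(M)$. Combined with the orthogonality from the first step and the $L^{2}$-isometry $\star_{Y}$, this gives $\mathcal{K}^{q} \oplus \star_{Y}\mathcal{K}^{m-q-1} = \mathcal{H}^{q}(Y)$ for each $q$, which is the lemma.

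\smallskip

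The main obstacle is the identification $\mathcal{K}^{q} = \iota^{\ast}H^{q}(M)$: the definition (\ref{E:2.2}) allows arbitrary harmonic fields $\phi$ with \emph{no} prescribed boundary condition, so one must check both that the harmonic part of $\iota^{\ast}\phi$ depends only on $[\phi] \in H^{q}(M)$ and that the full image $\iota^{\ast}H^{q}(M)$ is realised this way (via absolute harmonic representatives). Once this cohomological interpretation is in place, the rest of the proof is bookkeeping with Stokes and the long exact sequence.
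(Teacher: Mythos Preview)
Your argument is correct. The paper itself does not prove Lemma~\ref{Lemma:2.1}, deferring instead to Lemma~2.4 of [8], so there is no in-paper proof to compare against directly. That said, the surrounding text reveals a difference in logical order: the paper \emph{uses} Lemma~\ref{Lemma:2.1} to prove Lemma~\ref{Lemma:2.2} (the identification $\mathcal{K}^{q}\cong\Imm\iota^{\ast}$), by combining the injectivity of $\mathcal{G}$ with the dimension count $\dim\mathcal{K}=\tfrac12\dim H^{\bullet}(Y)$ that Lemma~\ref{Lemma:2.1} provides. You run this backwards: you establish $\mathcal{K}^{q}=\iota^{\ast}H^{q}(M)$ directly via both set-theoretic inclusions (the surjectivity coming from absolute harmonic representatives, which is the step the paper replaces by the dimension count), and then feed this identification together with Poincar\'e--Lefschetz duality on $(M,Y)$ into the spanning half of Lemma~\ref{Lemma:2.1}. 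The orthogonality step via Stokes on $M$ is clean and standard. Your organisation has the advantage of being self-contained and of delivering Lemma~\ref{Lemma:2.2} as a free byproduct; the paper's ordering, on the other hand, keeps Lemma~\ref{Lemma:2.1} as an independent input from [8].
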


\vspace{0.2 cm}

\noindent
We consider the homomorphism $\iota^{\ast} : H^{\bullet}(M) \rightarrow H^{\bullet}(Y)$ induced from the natural inclusion $\iota : Y \rightarrow M$.
It is well known that each cohomology class $[\omega] \in H^{\bullet}(M)$ has a unique representative $\omega_{0} \in \Omega^{\bullet}(M)$ such that
$d \omega_{0} = d^{\ast} \omega_{0} = 0$ and $\iota^{\ast} (\star_{M} \omega_{0}) = 0$ (see Theorem 2.7.3 in [6]).
Since $\iota^{\ast} \omega_{0}$ is a closed form, $[\iota^{\ast} \omega_{0}] \in H^{\bullet}(Y)$.
We denote by $(\iota^{\ast} \omega_{0})_{h}$ the harmonic part of $\iota^{\ast} \omega_{0}$
and define a map

\begin{eqnarray*}
{\mathcal G } : \Imm \left( \iota^{\ast} : H^{\bullet}(M) \rightarrow H^{\bullet}(Y) \right) \rightarrow {\mathcal K}, \qquad
{\mathcal G } ([\iota^{\ast} \omega_{0}]) = (\iota^{\ast} \omega_{0})_{h} .
\end{eqnarray*}

\noindent
A standard argument using the Lefschetz-Poincar\'e duality shows that
$\Dim \Imm \left( \iota^{\ast} : H^{\bullet}(M) \rightarrow H^{\bullet}(Y) \right)$ is equal to $\frac{1}{2} \Dim H^{\bullet}(Y)$.
Since ${\mathcal G}$ is a monomorphism, this fact together with Lemma \ref{Lemma:2.1} shows that
${\mathcal G}$ is an isomorphism. Summarizing this fact, we have the following result (cf. Corollary 8.4 in [9]).

\begin{lemma} \label{Lemma:2.2}
For each $q$, ${\mathcal K}^{q}$ can be naturally identified with $\Imm \left( \iota^{\ast} : H^{q}(M) \rightarrow H^{q}(Y) \right)$.
\end{lemma}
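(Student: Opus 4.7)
The plan is to verify that the map $\mathcal{G}$ constructed just before the lemma is degree-preserving and then upgrade the (ungraded) dimension count sketched in the paragraph above the lemma to a degree-wise isomorphism. I would proceed in four steps.

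First, I would check that $\mathcal{G}$ respects degrees and lands in $\mathcal{K}^q$ in the right degree. If $[\omega_0] \in H^q(M)$ is represented by the unique Hodge-type representative $\omega_0 \in \Omega^q(M)$ with $d\omega_0 = d^\ast \omega_0 = 0$ and $\iota^\ast(\star_M \omega_0) = 0$, then applying the boundary expansion (\ref{E:2.1}) to $\omega_0$ yields $\iota^\ast \omega_0 = d^Y \varphi_1 + \varphi_2$ with $\varphi_2 \in \mathcal{K}^q$. Thus $(\iota^\ast \omega_0)_h = \varphi_2 \in \mathcal{K}^q$, and $\mathcal{G}$ restricts to a well-defined map $\mathcal{G}^q : \Imm(\iota^\ast : H^q(M) \to H^q(Y)) \to \mathcal{K}^q$.

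Second, I would note that $\mathcal{G}^q$ is injective: if $\mathcal{G}^q([\iota^\ast \omega_0]) = \varphi_2 = 0$, then by (\ref{E:2.1}) the form $\iota^\ast \omega_0 = d^Y \varphi_1$ is exact on $Y$, so $[\iota^\ast \omega_0] = 0 \in H^q(Y)$.

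Third, I would match dimensions. Summing over $q$, Lemma \ref{Lemma:2.1} together with the fact that $\star_Y$ is a linear isomorphism on $\mathcal{H}^\bullet(Y)$ gives $\dim \mathcal{K} = \frac{1}{2} \dim \mathcal{H}^\bullet(Y) = \frac{1}{2} \dim H^\bullet(Y)$. On the other side, using the long exact cohomology sequence of the pair $(M,Y)$ together with the Lefschetz-Poincar\'e duality $H^q(M,Y) \cong H^{m-q}(M)^\ast$, one checks that $\Imm(\iota^\ast : H^\bullet(M) \to H^\bullet(Y))$ is a Lagrangian subspace for the intersection pairing on $H^\bullet(Y)$, so $\dim \Imm(\iota^\ast) = \frac{1}{2} \dim H^\bullet(Y)$. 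Combined with the injectivity from step two, this forces the total map $\mathcal{G} = \bigoplus_q \mathcal{G}^q$ to be an isomorphism. Because $\mathcal{G}$ splits as a direct sum of the degree-wise pieces, each $\mathcal{G}^q$ is an isomorphism as well, which is exactly the claimed natural identification.

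The main obstacle is step three, and within it the identification $\dim \Imm(\iota^\ast) = \frac{1}{2} \dim H^\bullet(Y)$. The bookkeeping with the long exact sequence of $(M,Y)$ together with the duality between $H^q(Y)$ and $H^{m-1-q}(Y)$ is what genuinely puts the Lefschetz-Poincar\'e duality to work; everything else is either a direct consequence of (\ref{E:2.1}) and Hodge theory or a purely formal decomposition argument.
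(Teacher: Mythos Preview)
Your proposal is correct and follows essentially the same approach as the paper: the paragraph preceding the lemma already defines $\mathcal{G}$, asserts its injectivity, and invokes the Lefschetz--Poincar\'e duality fact $\dim \Imm(\iota^{\ast}) = \tfrac{1}{2}\dim H^{\bullet}(Y)$ together with Lemma~\ref{Lemma:2.1} to conclude that $\mathcal{G}$ is an isomorphism. Your write-up simply expands these steps and makes explicit the passage from the total isomorphism to the degree-wise one via the grading, which is exactly what the paper intends.
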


\vspace{0.2 cm}

We next consider the natural isomorphism

\begin{equation}  \label{E:2.4}
\Psi : \Omega^{p}(N) \rightarrow C^{\infty}([0, 1), \Omega^{p}(Y) \oplus \Omega^{p-1}(Y)),  \qquad
\Psi(\omega_{1} + du \wedge \omega_{2}) = \left( \begin{array}{clcr} \omega_{1}  \\ \omega_{2} \end{array} \right).
\end{equation}

\noindent
We put ${\mathcal L}_{0} := \left( \begin{array}{clcr} {\mathcal K}  \\ {\mathcal K} \end{array} \right)$,
${\mathcal L}_{1} := \left( \begin{array}{clcr} {\star_{Y} \mathcal K}  \\ \star_{Y} {\mathcal K} \end{array} \right)$
 and consider the orthogonal projections defined by

\begin{eqnarray*}
& & \hspace{1.0 cm} {\mathcal P}_{-, {\mathcal L}_{0}}, \hspace{0.1 cm} {\mathcal P}_{+, {\mathcal L}_{1}} :
 \Omega^{\bullet}(Y) \oplus \Omega^{\bullet}(Y)  \rightarrow \Omega^{\bullet}(Y) \oplus \Omega^{\bullet}(Y)  \\
& & \Imm {\mathcal P}_{-, {\mathcal L}_{0}} = \left( \begin{array}{clcr} \Imm d^{Y} \oplus {\mathcal K} \\ \Imm d^{Y} \oplus {\mathcal K} \end{array} \right),
\qquad  \Imm {\mathcal P}_{+, {\mathcal L}_{1}} =
\left( \begin{array}{clcr} \Imm (d^{Y})^{\ast} \oplus \star_{Y} {\mathcal K} \\ \Imm (d^{Y})^{\ast} \oplus \star_{Y} {\mathcal K} \end{array} \right) .
\end{eqnarray*}

\noindent
We then define the spaces of differential forms satisfying the boundary conditions
${\mathcal P}_{-, {\mathcal L}_{0}}$ and ${\mathcal P}_{+, {\mathcal L}_{1}}$
by

\begin{eqnarray*}
\Omega^{q}_{{\mathcal P}_{-, {\mathcal L}_{0}}}(M) & := & \{ \phi \in \Omega^{q}(M) \mid {\mathcal P}_{-, {\mathcal L}_{0}} ( \phi|_{Y} ) = 0, \quad
{\mathcal P}_{-, {\mathcal L}_{0}} (( \star_{M} ( d + d^{\ast} ) \phi)|_{Y} ) = 0 \},  \\
\Omega^{q}_{{\mathcal P}_{+, {\mathcal L}_{1}}}(M) & := & \{ \phi \in \Omega^{q}(M) \mid {\mathcal P}_{+, {\mathcal L}_{1}} ( \phi|_{Y} ) = 0, \quad
{\mathcal P}_{+, {\mathcal L}_{1}} (( \star_{M} ( d + d^{\ast} ) \phi)|_{Y} ) = 0 \},
\end{eqnarray*}

\noindent
and also define

\begin{eqnarray}  \label{E:2.5}
\Omega^{q, \infty}_{{\mathcal P}_{-, {\mathcal L}_{0}}}(M) & = & \{ \phi \in \Omega^{q}(M) \mid
{\mathcal P}_{-, {\mathcal L}_{0}} \left( \left( ( \star_{M} (d + d^{\ast}))^{l} \phi \right)|_{Y}\right) = 0, \quad l = 0, 1, 2, \cdots  \}, \nonumber \\
\Omega^{q, \infty}_{{\mathcal P}_{+, {\mathcal L}_{1}}}(M) & = & \{ \phi \in \Omega^{q}(M) \mid
{\mathcal P}_{+, {\mathcal L}_{1}} \left( \left( ( \star_{M} (d + d^{\ast}))^{l} \phi \right)|_{Y}\right) = 0, \quad l = 0, 1, 2, \cdots  \}.
\end{eqnarray}

\vspace{0.2 cm}

Simple computation shows that if $\phi \in \Omega^{q}_{{\mathcal P}_{-, {\mathcal L}_{0}}}(M)$, then $\star_{M} \phi \in \Omega^{m-q}_{{\mathcal P}_{+, {\mathcal L}_{1}}}(M)$ and vice versa.
Similarly, for each $\phi \in \Omega^{q}_{{\mathcal P}_{-, {\mathcal L}_{0}}}(M)$ and $\psi \in \Omega^{q}_{{\mathcal P}_{+, {\mathcal L}_{1}}}(M)$,
we have

\begin{eqnarray}   \label{E:2.6}
{\mathcal P}_{+, {\mathcal L}_{1}} ( (d \phi )|_{Y} ) = 0 \qquad \text{and} \qquad {\mathcal P}_{-, {\mathcal L}_{0}} ( (d \psi )|_{Y} ) = 0.
\end{eqnarray}

\noindent
These imply that $\star_{M}$ maps $\hspace{0.1 cm} \Omega^{q, \infty}_{{\mathcal P}_{-, {\mathcal L}_{0}}}(M)$
($\Omega^{q, \infty}_{{\mathcal P}_{+, {\mathcal L}_{1}}}(M)$) into
$\hspace{0.1 cm} \Omega^{m-q, \infty}_{{\mathcal P}_{+, {\mathcal L}_{1}}}(M)$ ($\Omega^{m-q, \infty}_{{\mathcal P}_{-, {\mathcal L}_{0}}}(M)$) and
$d$ maps $\hspace{0.1 cm} \Omega^{q, \infty}_{{\mathcal P}_{-, {\mathcal L}_{0}}}(M)$
($\Omega^{q, \infty}_{{\mathcal P}_{+, {\mathcal L}_{1}}}(M)$) into
$\hspace{0.1 cm} \Omega^{q+1, \infty}_{{\mathcal P}_{+, {\mathcal L}_{1}}}(M)$ ($\Omega^{q+1, \infty}_{{\mathcal P}_{-, {\mathcal L}_{0}}}(M)$).

\begin{definition} \label{Definition:2.1}
We define projections ${\widetilde {\mathcal P}}_{0}$,
${\widetilde {\mathcal P}}_{1} : \Omega^{\bullet}(Y) \oplus \Omega^{\bullet}(Y) \rightarrow
\Omega^{\bullet}(Y) \oplus \Omega^{\bullet}(Y)$ as follows.
For $\phi \in \Omega^{q}(M, E)$
$$
{\widetilde {\mathcal P}}_{0} (\phi|_{Y}) = \begin{cases} {\mathcal P}_{-, {\mathcal L}_{0}} (\phi|_{Y}) \quad
\text{if} \quad q \quad \text{is} \quad \text{even} \\
{\mathcal P}_{+, {\mathcal L}_{1}} (\phi|_{Y}) \quad \text{if} \quad q \quad \text{is} \quad \text{odd} ,
\end{cases}
\qquad
{\widetilde {\mathcal P}}_{1} (\phi|_{Y}) = \begin{cases} {\mathcal P}_{+, {\mathcal L}_{1}} (\phi|_{Y}) \quad
\text{if} \quad q \quad \text{is} \quad \text{even} \\
{\mathcal P}_{-, {\mathcal L}_{0}} (\phi|_{Y}) \quad \text{if} \quad q \quad \text{is} \quad \text{odd} .
\end{cases}
$$

\end{definition}

\noindent

\noindent
Then the above argument leads to the following cochain complexes

\begin{eqnarray}
(\Omega^{\bullet, \infty}_{{\widetilde {\mathcal P}}_{0}}(M), \hspace{0.1 cm} d) & : &
 0 \longrightarrow \Omega^{0, \infty}_{{\mathcal P}_{-, {\mathcal L}_{0}}}(M) \stackrel{d}{\longrightarrow}
\Omega^{1, \infty}_{{\mathcal P}_{+, {\mathcal L}_{1}}}(M) \stackrel{d}{\longrightarrow}
\Omega^{2, \infty}_{{\mathcal P}_{-, {\mathcal L}_{0}}}(M) \stackrel{d}{\longrightarrow}
\cdots \longrightarrow 0.   \label{E:2.7} \\
(\Omega^{\bullet, \infty}_{{\widetilde {\mathcal P}}_{1}}(M), \hspace{0.1 cm} d) & : &
 0 \longrightarrow \Omega^{0, \infty}_{{\mathcal P}_{+, {\mathcal L}_{1}}}(M) \stackrel{d}{\longrightarrow}
\Omega^{1, \infty}_{{\mathcal P}_{-, {\mathcal L}_{0}}}(M) \stackrel{d}{\longrightarrow}
\Omega^{2, \infty}_{{\mathcal P}_{+, {\mathcal L}_{1}}}(M) \stackrel{d}{\longrightarrow}
\cdots  \longrightarrow 0.    \label{E:2.8}
\end{eqnarray}

\vspace{0.2 cm}

\noindent
\noindent
We define the Laplacians $\Delta^{q}_{{\widetilde {\mathcal P}}_{0}}$ and $\Delta^{q}_{{\widetilde {\mathcal P}}_{1}}$ by

\begin{eqnarray*}
\Delta^{q} := d_{q}^{\ast} d_{q} + d_{q-1} d_{q-1}^{\ast}, \qquad
\Dom \left( \Delta^{q}_{{\widetilde {\mathcal P}}_{0}}  \right) \hspace{0.1 cm} = \hspace{0.1 cm} \Omega^{q, \infty}_{{\widetilde {\mathcal P}}_{0}}(M)
\hspace{0.1 cm} = \hspace{0.1 cm}
\begin{cases} \Omega^{q, \infty}_{{\mathcal P}_{-, {\mathcal L}_{0}}}(M) & \text{for} \hspace{0.2 cm} q \hspace{0.2 cm} \even \\
\Omega^{q, \infty}_{{\mathcal P}_{+, {\mathcal L}_{1}}}(M) & \text{for} \hspace{0.2 cm} q \hspace{0.2 cm} \odd . \end{cases}
\end{eqnarray*}

\noindent
We define $\Dom \left( \Delta^{q}_{{\widetilde {\mathcal P}}_{1}}  \right)$ in the same way.
It is not difficult to see that ${\mathcal P}_{-, {\mathcal L}_{0}}$ and ${\mathcal P}_{+, {\mathcal L}_{1}}$ are well-posed boundary conditions
for the odd signature opertator and Laplacian in the sense of Seeley  ([7], [11]). We refer to Lemma 2.15 in [8] for details.
Hence, $\Delta^{q}_{{\widetilde {\mathcal P}}_{0}}$ and $\Delta^{q}_{{\widetilde {\mathcal P}}_{1}}$  have compact resolvents and discrete spectra.
Moreover, the Green formula shows that $\Delta^{q}_{{\widetilde {\mathcal P}}_{0}}$ and $\Delta^{q}_{{\widetilde {\mathcal P}}_{1}}$
are formally self-adjoint and non-negative.
The following lemma is straightforward (see Lemma 2.11 in [8] for details).

\vspace{0.2 cm}

\begin{lemma}    \label{Lemma:2.3}
The cohomologies of the complex $(\Omega^{\bullet, \infty}_{{\widetilde {\mathcal P}}_{0}/{\widetilde {\mathcal P}}_{1}}(M), \hspace{0.1 cm} d)$
are given as follows.
\begin{eqnarray}     \label{E:2.9}
H^{q}((\Omega^{\bullet, \infty}_{{\widetilde {\mathcal P}}_{0}}(M), \hspace{0.1 cm} d)) & = &
\Ker \Delta^{q}_{{\widetilde {\mathcal P}}_{0}} =
\begin{cases} H^{q}(M, Y) \quad \text{if} \quad q \quad \text{is} \quad \text{even} \\
H^{q}(M) \quad \text{if} \quad q \quad \text{is} \quad \text{odd} ,  \end{cases}    \nonumber \\
H^{q}((\Omega^{\bullet, \infty}_{{\widetilde {\mathcal P}}_{1}}(M), \hspace{0.1 cm} d)) & = &
\Ker \Delta^{q}_{{\widetilde {\mathcal P}}_{1}} =
\begin{cases} H^{q}(M) \quad \text{if} \quad q \quad \text{is} \quad \text{even} \\
H^{q}(M, Y) \quad \text{if} \quad q \quad \text{is} \quad \text{odd} .   \end{cases}
\end{eqnarray}
\end{lemma}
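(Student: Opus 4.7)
My plan is to prove the lemma by Hodge theory: identify the complex cohomology with harmonic forms satisfying the boundary condition, and then check these harmonic forms coincide with classical absolute/relative harmonic representatives.

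First, since $\Delta^{q}_{\widetilde{\mathcal P}_{0}/\widetilde{\mathcal P}_{1}}$ is self-adjoint, non-negative and has compact resolvent (as noted just before the lemma), I get an $L^{2}$-orthogonal Hodge-type decomposition of $\Omega^{q,\infty}_{\widetilde{\mathcal P}_{i}}(M)$ into $\Ker \Delta^{q}_{\widetilde{\mathcal P}_{i}}$, $\Imm d_{q-1}$ and $\Imm d_{q}^{\ast}$ taken inside the complexes \eqref{E:2.7}--\eqref{E:2.8}. This yields the standard isomorphism $H^{q}(\Omega^{\bullet,\infty}_{\widetilde{\mathcal P}_{i}}(M),d) \cong \Ker \Delta^{q}_{\widetilde{\mathcal P}_{i}}$, establishing the first equality in \eqref{E:2.9}.

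Second, I need to show that every $\phi \in \Ker \Delta^{q}_{\widetilde{\mathcal P}_{i}}$ actually satisfies $d\phi = 0$ and $d^{\ast}\phi = 0$. Pairing $\Delta \phi = 0$ with $\phi$ and applying Green's formula, the boundary integral is made to vanish by the two conditions in \eqref{E:2.5}: the condition on $\phi|_{Y}$ pairs against the one on $(\star_{M}(d+d^{\ast})\phi)|_{Y}$, which is precisely the symmetry built into the well-posed boundary conditions $\mathcal P_{-,\mathcal L_{0}}$ and $\mathcal P_{+,\mathcal L_{1}}$ (cf.\ Lemma 2.15 of [8]). Hence a harmonic $\phi$ is both closed and co-closed, so its boundary restriction has the form \eqref{E:2.1} with $\varphi_{2}\in\mathcal K$ and $\psi_{2}\in\star_{Y}\mathcal K$ by \eqref{E:2.2}--\eqref{E:2.3}.

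Third, I reduce the boundary condition to the classical ones. Consider $\widetilde{\mathcal P}_{0}$ in even degree, where the condition is $\mathcal P_{-,\mathcal L_{0}}(\phi|_{Y})=0$, i.e.\ the tangential and normal components of $\phi|_{Y}$ must lie in the orthogonal complement of $\Imm d^{Y} \oplus \mathcal K$. The tangential component is $d^{Y}\varphi_{1} + \varphi_{2}$: the first term lies in $\Imm d^{Y}$ and the second in $\mathcal K$, both of which we are projecting out, so orthogonality forces $d^{Y}\varphi_{1}=0$ and $\varphi_{2}=0$, giving $\iota^{\ast}\phi = 0$ — the classical relative boundary condition. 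The normal component $d^{Y\ast}\psi_{1}+\psi_{2}$ already lies in $\Imm(d^{Y})^{\ast}\oplus \star_{Y}\mathcal K$ and hence imposes no further constraint. Thus $\Ker \Delta^{q}_{\widetilde{\mathcal P}_{0}}$ coincides with the relative harmonic space, which represents $H^{q}(M,Y)$. For odd degree, $\mathcal P_{+,\mathcal L_{1}}(\phi|_{Y})=0$ symmetrically forces $d^{Y\ast}\psi_{1}=0$ and $\psi_{2}=0$, giving the absolute boundary condition $\iota^{\ast}(\star_{M}\phi)=0$ and hence $H^{q}(M)$. The $\widetilde{\mathcal P}_{1}$ case follows by interchanging parities, using that $\star_{M}$ swaps the two boundary conditions as noted before \eqref{E:2.6}.

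The main obstacle is the second step: verifying that the twin conditions in \eqref{E:2.5} genuinely make the boundary contribution in Green's formula vanish and so force closed-plus-co-closed in the kernel. The Lagrangian orthogonality $\mathcal K \perp \star_{Y}\mathcal K$ of Lemma \ref{Lemma:2.1} is the key algebraic ingredient that makes both the well-posedness and the subsequent projection arguments work; everything else is a direct Hodge-decomposition computation on the boundary.
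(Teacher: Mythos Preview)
Your proposal is correct and follows essentially the same route as the paper's proof. The paper's argument is terser: it asserts via Green's formula that $\Ker \Delta^{q}_{\mathcal P_{-,\mathcal L_{0}}}$ consists of closed-and-coclosed forms with $\mathcal P_{-,\mathcal L_{0}}(\phi|_{Y})=0$, and then checks the two inclusions between this set and the classical relative harmonic space $\mathcal H^{q}_{\rel}(M)$ exactly as you do in your third step; your extra first step (the Hodge decomposition identifying complex cohomology with the Laplacian kernel) is deferred in the paper to reference~[8].
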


\begin{proof}
We denote by ${\mathcal H}_{\rel}^{q}(M) :=
\{ \phi = \phi_{1} + du \wedge \phi_{2} \in \Omega^{q}(M) \mid d \phi = d^{\ast} \phi = 0, \phi_{1}|_{Y} = 0 \}$
the space of harmonic $q$-forms satisfying the relative boundary condition. It is well known that ${\mathcal H}_{\rel}^{q}(M)$
is isomorphic to the singular cohomology $H^{q}(M, Y)$.
The Green theorem shows that
$\Ker \Delta^{q}_{{\mathcal P}_{-, {\mathcal L}_{0}}} = \{ \phi \in \Omega^{q}(M) \mid d \phi = d^{\ast} \phi = 0,
{\mathcal P}_{-, {\mathcal L}_{0}} (\phi|_{Y}) = 0 \}$.
We are going to show that $\Ker \Delta^{q}_{{\mathcal P}_{-, {\mathcal L}_{0}}} = {\mathcal H}_{\rel}^{q}(M)$.
Let $\phi = \phi_{1} + du \wedge \phi_{2} \in {\mathcal H}_{\rel}^{q}(M)$.
Then by (\ref{E:2.1}) with the fact that $\phi_{1}|_{Y} = 0$,
we have $\phi|_{Y} = du \wedge \left( d^{Y \ast} \psi_{1} + \psi_{2} \right)$, which shows that
${\mathcal P}_{-, {\mathcal L}_{0}} (\phi|_{Y}) = 0$. Hence, $\phi \in \Ker \Delta^{q}_{{\mathcal P}_{-, {\mathcal L}_{0}}}$.
Conversely, let $\phi = \phi_{1} + du \wedge \phi_{2} \in \Ker \Delta^{q}_{{\mathcal P}_{-, {\mathcal L}_{0}}}$.
By (\ref{E:2.1}) with the fact that ${\mathcal P}_{-, {\mathcal L}_{0}} (\phi|_{Y}) = 0$,
we have $\phi|_{Y} = du \wedge \left( d^{Y \ast} \psi_{1} + \psi_{2} \right)$,
which shows that $\phi \in {\mathcal H}_{\rel}^{q}(M)$.
Other cases can be checked in the same way. This completes the proof of the lemma.
\end{proof}

\noindent
In the next section, we discuss the Lefschetz fixed point formula on the complexes (\ref{E:2.7}) and (\ref{E:2.8}).

\vspace{0.2 cm}

\section{Lefschetz fixed point formula on the complex
$(\Omega^{\bullet, \infty}_{{\widetilde {\mathcal P}}_{0}/{\widetilde {\mathcal P}}_{1}}(M), \hspace{0.1 cm} d)$}

\vspace{0.2 cm}

We recall that $g^{M}$ is assumed to be a product metric near $Y$ and
begin with the following definition.

\begin{definition}  \label{Definition:3.1}
For a smooth map $f : M \rightarrow M$, $f$ is said to satisfy the Condition A
if on some collar neighborhood $[0, \epsilon) \times Y$ of $Y$, $f : [0, \epsilon) \times Y \rightarrow M$ is expressed by
$f (u, y) = ( c u, B(y) )$, where $c$ is a positive real number which is not equal to $1$ and
$B : (Y, g^{Y}) \rightarrow (Y, g^{Y})$ is an isometry.
\end{definition}

\vspace{0.2 cm}

\noindent
{\it Remark} : If $f : M \rightarrow M$ satisfies the Condition A, then all the fixed points in $Y$ are attracting
if $0 < c < 1$ and repelling if $c > 1$.

\vspace{0.2 cm}

\noindent
If $f$ satisfies the Condition A, for $\omega = \omega_{1} + du \wedge \omega_{2}$ on a collar neighborhood of $Y$,
$f^{\ast} \omega = B^{\ast} \omega_{1} + c du \wedge B^{\ast} \omega_{2}$.
Since $B$ is an isometry, $B^{\ast}$ maps $\Imm d^{Y}$ and $\Imm (d^{Y})^{\ast}$ onto $\Imm d^{Y}$ and $\Imm (d^{Y})^{\ast}$, respectively.
The following lemma shows that
$f^{\ast}$ maps $\Omega^{\bullet, \infty}_{{\widetilde {\mathcal P}}_{0}}(M)$ into $\Omega^{\bullet, \infty}_{{\widetilde {\mathcal P}}_{0}}(M)$
and maps $\Omega^{\bullet, \infty}_{{\widetilde {\mathcal P}}_{1}}(M)$ into $\Omega^{\bullet, \infty}_{{\widetilde {\mathcal P}}_{1}}(M)$.

\vspace{0.2 cm}

\begin{lemma}  \label{Lemma:3.1}
$B^{\ast}$ maps ${\mathcal K}^{q}$ onto ${\mathcal K}^{q}$ and $\star_{Y} {\mathcal K}^{q}$ onto $\star_{Y} {\mathcal K}^{q}$.
\end{lemma}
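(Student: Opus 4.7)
The plan is to exploit the natural identification $\mathcal{K}^{q} \cong \Imm(\iota^{\ast} : H^{q}(M) \to H^{q}(Y))$ supplied by Lemma \ref{Lemma:2.2}, together with the functoriality of pullback under $f$. Because $B : Y \to Y$ is an isometry, $B^{\ast}$ commutes with $d^{Y}$, $(d^{Y})^{\ast}$, $\Delta_{Y}$, and (up to a sign depending on whether $B$ preserves orientation) with $\star_{Y}$. In particular, $B^{\ast}$ preserves $\mathcal{H}^{q}(Y)$ and commutes with the orthogonal harmonic projection $\pi_{h} : \Omega^{q}(Y) \to \mathcal{H}^{q}(Y)$.

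Setting $u = 0$ in Condition A gives $f \circ \iota = \iota \circ B$, so $B^{\ast} \iota^{\ast} = \iota^{\ast} f^{\ast}$ both at the level of forms and on cohomology; hence $B^{\ast}$ carries $\Imm(\iota^{\ast} : H^{q}(M) \to H^{q}(Y))$ into itself. Any $\varphi_{2} \in \mathcal{K}^{q}$ is of the form $\mathcal{G}([\iota^{\ast} \omega_{0}]) = \pi_{h}(\iota^{\ast} \omega_{0})$ for some harmonic $\omega_{0} \in \Omega^{q}(M)$ with $\iota^{\ast}(\star_{M} \omega_{0}) = 0$. Combining the two observations,
\[
B^{\ast} \varphi_{2} \;=\; B^{\ast} \pi_{h}(\iota^{\ast} \omega_{0}) \;=\; \pi_{h}(B^{\ast} \iota^{\ast} \omega_{0}) \;=\; \pi_{h}(\iota^{\ast} f^{\ast} \omega_{0}),
\]
which is the harmonic part of a representative of $\iota^{\ast}[f^{\ast} \omega_{0}] \in \Imm(\iota^{\ast})$ and therefore lies in $\mathcal{K}^{q}$ by Lemma \ref{Lemma:2.2}. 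Applying the same argument to the isometry $B^{-1}$ yields the reverse inclusion, so $B^{\ast} \mathcal{K}^{q} = \mathcal{K}^{q}$.

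For the second assertion, write $B^{\ast} \star_{Y} = \epsilon_{B} \star_{Y} B^{\ast}$ with $\epsilon_{B} = \pm 1$ according to the orientation behavior of $B$. Then $B^{\ast}(\star_{Y} \mathcal{K}^{q}) = \epsilon_{B} \star_{Y} (B^{\ast} \mathcal{K}^{q}) = \star_{Y} \mathcal{K}^{q}$ by what was just shown. I do not expect a serious obstacle; the only point worth flagging is that $B^{\ast}$ commutes with the Hodge projection $\pi_{h}$, which is automatic from $B$ being an isometry, and that the sign $\epsilon_{B}$ is immaterial for preservation of the subspace.
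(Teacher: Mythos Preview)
Your proof is correct and follows essentially the same route as the paper: both use the commutative square $f \circ \iota = \iota \circ B$ to obtain $B^{\ast}\iota^{\ast} = \iota^{\ast}f^{\ast}$ on cohomology, and then invoke Lemma~\ref{Lemma:2.2} to conclude that $B^{\ast}$ preserves $\mathcal{K}^{q}$. You are simply more explicit, spelling out that $B^{\ast}$ commutes with the harmonic projection and handling $\star_{Y}\mathcal{K}^{q}$ via the relation $B^{\ast}\star_{Y} = \pm\,\star_{Y}B^{\ast}$; the paper leaves the $\star_{Y}\mathcal{K}^{q}$ case implicit (it also follows from the orthogonal splitting $\mathcal{H}^{\bullet}(Y) = \mathcal{K} \oplus \star_{Y}\mathcal{K}$ of Lemma~\ref{Lemma:2.1} together with the fact that $B^{\ast}$ is an $L^{2}$-isometry preserving $\mathcal{H}^{\bullet}(Y)$).

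One small wrinkle: your surjectivity step ``applying the same argument to the isometry $B^{-1}$'' does not literally go through, since the argument relies on the map $f$ and nothing provides a smooth $g : M \to M$ with $g \circ \iota = \iota \circ B^{-1}$ ($f$ need not be invertible). The fix is immediate and is what the paper's phrase ``since $B$ is an isometry, it is enough\dots'' tacitly uses: $B^{\ast}$ is injective on the finite-dimensional space $\mathcal{K}^{q}$, so $B^{\ast}\mathcal{K}^{q} \subset \mathcal{K}^{q}$ already forces equality.
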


\begin{proof}
Since $B$ is an isometry,
it is enough to show that $B^{\ast}$ maps ${\mathcal K}^{q}$ into ${\mathcal K}^{q}$.
The following commutative diagrams show that for $[\omega] \in H^{q}(M)$, $B^{\ast} \iota^{\ast} \omega = \iota^{\ast} f^{\ast} \omega$.

\begin{eqnarray*}
\begin{CD}
  Y      & @> \iota >>  & M \\
  @V B VV   \circlearrowright     &       & @VV f V \\
 Y     & @> \iota >>  &  M
\end{CD}
\qquad  \Longrightarrow  \qquad
\begin{CD}
  H^{q}(M)      & @> \iota^{\ast} >>  & H^{q}(Y) \\
  @V f^{\ast} VV   \circlearrowright     &       & @VV B^{\ast} V \\
 H^{q}(M)     & @> \iota^{\ast} >>  &  H^{q}(Y)
\end{CD}
\end{eqnarray*}

\noindent
This fact together with Lemma \ref{Lemma:2.2} implies the result.
\end{proof}

\vspace{0.2 cm}

Since $f^{\ast}$ commutes with $d$, $f^{\ast} : (\Omega^{\bullet, \infty}_{{\widetilde {\mathcal P}}_{0}/{\widetilde {\mathcal P}}_{1}}(M),
\hspace{0.1 cm} d) \rightarrow (\Omega^{\bullet, \infty}_{{\widetilde {\mathcal P}}_{0}/{\widetilde {\mathcal P}}_{1}}(M), \hspace{0.1 cm} d)$
is a cochain map. In this section we are going to discuss the Lefschetz fixed point formula on these complexes for smooth maps
having only simple fixed points and satisfying the Condition A.

\begin{definition}  \label{Definition:3.2}
Suppose that $f : M \rightarrow M$ is a smooth map satisfying the Condition A.
We define the Lefschetz number of $f$ with respect to the complex
$(\Omega^{\bullet, \infty}_{{\widetilde {\mathcal P}}_{i}}(M), \hspace{0.1 cm} d)$ ($i = 0, 1$) by
\begin{eqnarray*}
L_{\widetilde{\mathcal P}_{i}}(f) & = &
\sum_{q=0}^m (-1)^{q} \Tr \left( f^* : H^{q}((\Omega^{\bullet, \infty}_{{\widetilde {\mathcal P}}_{i}}(M), \hspace{0.1 cm} d)) \rightarrow
H^{q}((\Omega^{\bullet, \infty}_{{\widetilde {\mathcal P}}_{i}}(M), \hspace{0.1 cm} d)) \right).
\end{eqnarray*}
\end{definition}

\vspace{0.2 cm}

We are going to express $L_{\widetilde{\mathcal P}_{i}}(f)$ in terms of fixed points of $f$ and some additional data.
We consider $L_{\widetilde{\mathcal P}_{0}}(f)$ first.
Using Lemma \ref{Lemma:2.3} and the standard argument for the trace of a heat operator (see Lemma 1.10.1 in [6] or Theorem 4 in [3] for details), we have

\begin{eqnarray}   \label{E:3.1}
L_{\widetilde{\mathcal P}_{0}}(f)  & = &
\sum_{q = \even} \Tr \left( f^{\ast} : H^{q}(M, Y) \rightarrow H^{q}(M, Y) \right) \hspace{0.1 cm} - \hspace{0.1 cm}
\sum_{q=\odd} \Tr \left( f^{\ast} : H^{q}(M) \rightarrow H^{q}(M) \right)    \\
& = & \sum_{q=0}^{m} (-1)^{q} \Tr \left( f^{\ast} e^{- t \Delta^{q}_{\widetilde{\mathcal P}_{0}}} \right)
\hspace{0.1 cm} = \hspace{0.1 cm} \lim_{t \rightarrow 0} \sum_{q=0}^{m} (-1)^{q} \Tr \left( f^{\ast} e^{- t \Delta^{q}_{\widetilde{\mathcal P}_{0}}} \right)
\nonumber   \\
& = & \lim_{t \rightarrow 0} \left\{ \sum_{q= \even} \Tr \left( f^{\ast} e^{- t \Delta^{q}_{{\mathcal P}_{-, {\mathcal L}_{0}}}} \right)
\hspace{0.1 cm} - \hspace{0.1 cm}  \sum_{q= \odd} \Tr \left( f^{\ast} e^{- t \Delta^{q}_{{\mathcal P}_{+, {\mathcal L}_{1}}}} \right)  \right\}  \nonumber  \\
& = & \lim_{t \rightarrow 0} \int_{M} \left\{ \sum_{q= \even}  \Tr
\left( {\mathcal T}_{q} (x) {\mathcal E}^{q}_{{\mathcal P}_{-, {\mathcal L}_{0}}} (t, f(x), x) \right)
\hspace{0.1 cm} - \hspace{0.1 cm}
\sum_{q= \odd} \Tr \left( {\mathcal T}_{q}(x) {\mathcal E}^{q}_{{\mathcal P}_{+, {\mathcal L}_{1}}} (t, f(x), x) \right) \right\} d vol(x),  \nonumber
\end{eqnarray}

\noindent
where ${\mathcal T}_{q}(x) := \Lambda^{q}( (df (x))^{T} ) : \Lambda^{q} T^{\ast}_{f(x)} M \rightarrow \Lambda^{q} T^{\ast}_{x} M$ is the pull-back map
mapping the fiber over $f(x)$ to the fiber over $x$ and
${\mathcal E}^{q}_{{\mathcal P}_{-, {\mathcal L}_{0}}/{\mathcal P}_{+, {\mathcal L}_{1}}} (t, x, z)$ is the kernel of
$e^{- t \Delta^{q}_{{\mathcal P}_{-, {\mathcal L}_{0}}/{\mathcal P}_{+, {\mathcal L}_{1}}}}$.
We choose $\epsilon > 0$ such that
$( [0, 2 \epsilon) \times Y ) \cap {\mathcal F}(f) = {\mathcal F}_{Y}(f)$.
For each $x \in {\mathcal F}_{0}(f)$, choose a small open neighborhood $U_{x}$ of $x$ such that $U_{x} \cap ( [0, \epsilon) \times Y ) = \emptyset$.
Putting $W:= M - \left( \cup_{x \in {\mathcal F}_{0}(f)} U_{x} \cup [0, \frac{\epsilon}{7}) \times Y \right)$,
the standard argument (see Lemma 1.10.2 in [6] or Theorem 5 in [3] for details)  shows that

\begin{eqnarray}   \label{E:3.2}
\lim_{t \rightarrow 0} \int_{W}
\Tr \left( {\mathcal T}_{q}(x) {\mathcal E}^{q}_{{\mathcal P}_{-, {\mathcal L}_{0}}/{\mathcal P}_{+, {\mathcal L}_{1}}} (t, f(x), x) \right) d vol(x)
& = & 0.
\end{eqnarray}

\noindent
Hence, we can rewrite (\ref{E:3.1}) as follows.

\begin{eqnarray}   \label{E:3.3}
 L_{\widetilde{\mathcal P}_{0}}(f)  & = &
 \lim_{t \rightarrow 0} \sum_{x \in {\mathcal F}_{0}(f)} \sum_{q= \even} \int_{U_{x}}
\Tr \left( {\mathcal T}_{q}(x) {\mathcal E}^{q}_{{\mathcal P}_{-, {\mathcal L}_{0}}} (t, f(x), x) \right) d vol(x)  \nonumber   \\
& - &  \lim_{t \rightarrow 0} \sum_{x \in {\mathcal F}_{0}(f)}
\sum_{q= \odd} \int_{U_{x}} \Tr \left( {\mathcal T}_{q}(x) {\mathcal E}^{q}_{{\mathcal P}_{+, {\mathcal L}_{1}}} (t, f(x), x) \right) d vol(x)   \nonumber  \\
& + &
\lim_{t \rightarrow 0}  \sum_{q= \even} \int_{Y} \int_{0}^{\frac{\epsilon}{7}} \Tr
\left( {\mathcal T}_{q}(x) {\mathcal E}^{q}_{{\mathcal P}_{-, {\mathcal L}_{0}}} (t, f(x), x) \right) du \hspace{0.1 cm} d vol(y)  \nonumber \\
& - &  \lim_{t \rightarrow 0}
\sum_{q= \odd} \int_{Y} \int_{0}^{\frac{\epsilon}{7}} \Tr \left( {\mathcal T}_{q}(x) {\mathcal E}^{q}_{{\mathcal P}_{+, {\mathcal L}_{1}}} (t, f(x), x) \right)
du \hspace{0.1 cm} d vol(y)  .
\end{eqnarray}

We next construct the parametrix $Q^{q}_{{\mathcal P}_{-, {\mathcal L}_{0}}/{\mathcal P}_{+, {\mathcal L}_{1}}}(t, x, z)$ of the heat kernel
${\mathcal E}^{q}_{{\mathcal P}_{-, {\mathcal L}_{0}}/{\mathcal P}_{+, {\mathcal L}_{1}}} (t, x, z)$
by combining the interior contribution and the boundary contribution.
We denote by ${\widetilde M}$ the closed double of $M$, {\it i.e.}, ${\widetilde M} = M \cup_{Y} M$ and
extend the Laplacian $\Delta^{q}$ on $M$ to the Laplacian on ${\widetilde M}$, denoted by ${\widetilde \Delta}^{q}$.
Let ${\widetilde \E_{q}}(t, x, z)$ be the kernel of the heat operator $e^{-t {\widetilde \Delta}^{q}}$.
It is well known (for example, p.225 in [4]) that

\begin{eqnarray} \label{E:3.4}
| {\widetilde \E_{q}}(t, x, z) | \hspace{0.1 cm} \leq  \hspace{0.1 cm}   c_{1} t^{- \frac{m}{2}} e^{- c_{2} \frac{d(x, z)^{2}}{t}} ,
\end{eqnarray}

\noindent
where $c_{i}$'s are some positive constants.

Let $N_{\infty} := [0, \infty) \times Y$ be a half infinite cylinder and
$\Delta^{q}_{N_{\infty}} := - \partial_{u}^{2} + \left( \begin{array}{clcr} \Delta_{Y}^{q} \\ \Delta^{q-1}_{Y} \end{array} \right) $
be the Laplacian acting on $q$-forms on $N_{\infty}$.
We decompose $\Omega^{q}(Y)$ by  $\Omega^{q}(Y)  =  \Omega^{q}_{-}(Y) \oplus \Omega^{q}_{+}(Y)$, where

\begin{eqnarray}   \label{E:3.5}
\Omega^{q}_{-}(Y) & := & \left( \Imm d^{Y} \oplus {\mathcal K} \right) \cap \Omega^{q}(Y), \qquad
\Omega^{q}_{+}(Y) \hspace{0.1 cm} := \hspace{0.1 cm} \left( \Imm (d^{Y})^{\ast} \oplus \star_{Y} {\mathcal K} \right) \cap \Omega^{q}(Y).
\end{eqnarray}

\noindent
We denote by $\{ \phi_{q, j} \mid j = 1, 2, \cdots \}$ and $\{ \psi_{q, j} \mid j = 1, 2, \cdots \}$ the orthonormal bases of
$\Omega^{q}_{-}(Y)$ and $\Omega^{q}_{+}(Y)$ consisting of eigenforms of $\Delta_{Y}^{q}$ with eigenvalues
$\{ \lambda_{q, j} \mid j = 1, 2, \cdots \}$ and $\{ \mu_{q, j} \mid j = 1, 2, \cdots \}$, respectively.
Then the heat kernels ${\mathcal E}^{\cyl, q}_{{\mathcal P}_{-, {\mathcal L}_{0}}}$ and ${\mathcal E}^{\cyl, q}_{{\mathcal P}_{+, {\mathcal L}_{1}}}$ of $\Delta^{q}_{N_{\infty}}$ with respect to the boundary conditions
${\mathcal P}_{-, {\mathcal L}_{0}}$ and ${\mathcal P}_{+, {\mathcal L}_{1}}$ on $\{ 0 \} \times Y$ are given as follows (cf. p.226 in [4]).

\begin{eqnarray}   \label{E:3.6}
{\mathcal E}^{\cyl, q}_{{\mathcal P}_{-, {\mathcal L}_{0}}} (t, (u, y), (v, y^{\prime}))
& = & \sum_{j=1}^{\infty} \frac{e^{-t \lambda_{q, j}}}{\sqrt{4 \pi t}} \left( e^{- \frac{(u - v)^{2}}{4t}} - e^{- \frac{(u + v)^{2}}{4t}} \right)
\phi_{q, j}(y) \otimes \phi^{\ast}_{q, j}(y^{\prime})     \\
& + & \sum_{j=1}^{\infty} \frac{e^{-t \mu_{q, j}}}{\sqrt{4 \pi t}} \left( e^{- \frac{(u - v)^{2}}{4t}} + e^{- \frac{(u + v)^{2}}{4t}} \right)
\psi_{q, j}(y) \otimes \psi^{\ast}_{q, j}(y^{\prime})   \nonumber   \\
& + & \sum_{j=1}^{\infty} \frac{e^{-t \lambda_{q-1, j}}}{\sqrt{4 \pi t}} \left( e^{- \frac{(u - v)^{2}}{4t}} - e^{- \frac{(u + v)^{2}}{4t}} \right)
(du \wedge \phi_{q-1, j}(y)) \otimes (dv \wedge \phi_{q-1, j}(y^{\prime}))^{\ast}   \nonumber   \\
& + & \sum_{j=1}^{\infty} \frac{e^{-t \mu_{q-1, j}}}{\sqrt{4 \pi t}} \left( e^{- \frac{(u - v)^{2}}{4t}} + e^{- \frac{(u + v)^{2}}{4t}} \right)
(du \wedge \psi_{q-1, j}(y)) \otimes (dv \wedge \psi_{q-1, j}(y^{\prime}))^{\ast},   \nonumber
\end{eqnarray}

\begin{eqnarray}   \label{E:3.7}
{\mathcal E}^{\cyl, q}_{{\mathcal P}_{+, {\mathcal L}_{1}}} (t, (u, y), (v, y^{\prime}))
& = & \sum_{j=1}^{\infty} \frac{e^{-t \lambda_{q, j}}}{\sqrt{4 \pi t}} \left( e^{- \frac{(u - v)^{2}}{4t}} + e^{- \frac{(u + v)^{2}}{4t}} \right)
\phi_{q, j}(y) \otimes \phi^{\ast}_{q, j}(y^{\prime})     \\
& + & \sum_{j=1}^{\infty} \frac{e^{-t \mu_{q, j}}}{\sqrt{4 \pi t}} \left( e^{- \frac{(u - v)^{2}}{4t}} - e^{- \frac{(u + v)^{2}}{4t}} \right)
\psi_{q, j}(y) \otimes \psi^{\ast}_{q, j}(y^{\prime})   \nonumber   \\
& + & \sum_{j=1}^{\infty} \frac{e^{-t \lambda_{q-1, j}}}{\sqrt{4 \pi t}} \left( e^{- \frac{(u - v)^{2}}{4t}} + e^{- \frac{(u + v)^{2}}{4t}} \right)
(du \wedge \phi_{q-1, j}(y)) \otimes (dv \wedge \phi_{q-1, j}(y^{\prime}))^{\ast}   \nonumber   \\
& + & \sum_{j=1}^{\infty} \frac{e^{-t \mu_{q-1, j}}}{\sqrt{4 \pi t}} \left( e^{- \frac{(u - v)^{2}}{4t}} - e^{- \frac{(u + v)^{2}}{4t}} \right)
(du \wedge \psi_{q-1, j}(y)) \otimes (dv \wedge \psi_{q-1, j}(y^{\prime}))^{\ast}.   \nonumber
\end{eqnarray}

Let $\rho(a, b)$ be a smooth increasing function of real variable such that
\[ \rho(a, b) (u) = \left\{ \begin{array}{ll} 0 & \mbox{for $u \leq a$} \\
1 & \mbox{for $u \geq b$} \hspace{0.1 cm}.
\end{array} \right. \]
We put

\begin{eqnarray*}
\phi_{1} := 1 - \rho(\frac{5 \epsilon}{7}, \frac{6 \epsilon}{7}), \quad  \psi_{1} := 1 - \rho(\frac{3 \epsilon}{7}, \frac{4 \epsilon}{7}), \quad
 \phi_{2} := \rho(\frac{\epsilon}{7}, \frac{2 \epsilon}{7}), \quad  \psi_{2} := \rho(\frac{3 \epsilon}{7}, \frac{4 \epsilon}{7}),
\end{eqnarray*}

\noindent
and
\begin{eqnarray} \label{E:3.8}
{\mathcal Q}^{q}_{{\mathcal P}_{-, {\mathcal L}_{0}}}(t, (u, y), (v, y^{\prime}))  & = &
\phi_{1}(u) \E^{\cyl, q}_{{\mathcal P}_{-, {\mathcal L}_{0}}}(t, (u, y), (v, y^{\prime})) \psi_{1}(v) +
\phi_{2}(u) {\widetilde \E}^{q}(t, (u, y), (v, y^{\prime})) \psi_{2}(v),  \nonumber  \\
{\mathcal Q}^{q}_{{\mathcal P}_{+, {\mathcal L}_{1}}}(t, (u, y), (v, y^{\prime})) & = &
\phi_{1}(u) \E^{\cyl, q}_{{\mathcal P}_{+, {\mathcal L}_{1}}}(t, (u, y), (v, y^{\prime})) \psi_{1}(v) +
\phi_{2}(u) {\widetilde \E}^{q}(t, (u, y), (v, y^{\prime})) \psi_{2}(v).
\end{eqnarray}

\noindent
Then, ${\mathcal Q}^{q}_{{\mathcal P}_{-, {\mathcal L}_{0}}}$ and ${\mathcal Q}^{q}_{{\mathcal P}_{+, {\mathcal L}_{1}}}$ are
 parametrices for the kernels of $e^{-t \Delta^{q}_{{\mathcal P}_{-, {\mathcal L}_{0}}}}$ and
 $e^{-t \Delta^{q}_{{\mathcal P}_{+, {\mathcal L}_{1}}}}$, respectively.
The standard computation using (\ref{E:3.4}), (\ref{E:3.6}) and (\ref{E:3.7}) (see [2], [4] for details) shows that for $0 < t \leq 1$ and
$\alpha = {\mathcal P}_{-, {\mathcal L}_{0}}$ or ${\mathcal P}_{+, {\mathcal L}_{1}}$,
there exist some positive constants $c_{1}$ and $c_{2}$ such that

\begin{equation} \label{E:3.9}
| {\mathcal E}^{q}_{\alpha} (t, (u, y), (v, y^{\prime})) -
{\mathcal Q}^{q}_{\alpha} (t, (u, y), (v, y^{\prime})) | \leq c_{1} e^{- \frac{c_{2}}{t}},
\end{equation}

\noindent
which shows that

\begin{eqnarray} \label{E:3.10}
\lim_{t \rightarrow 0} \left( {\mathcal E}^{q}_{\alpha} (t, (u, y), (v, y^{\prime})) - {\mathcal Q}^{q}_{\alpha} (t, (u, y), (v, y^{\prime}))
\right) & = & 0.
\end{eqnarray}

\noindent
Hence, in view of (\ref{E:3.3}) with $x \in {\mathcal F}_{0}(f)$, we have

\begin{eqnarray} \label{E:3.11}
\lim_{t \rightarrow 0} \int_{U_{x}} \Tr \left( {\mathcal T}_{q}(x) {\mathcal E}^{q}_{\alpha}(t, f(x), x) \right) d vol(x) & = &
\lim_{t \rightarrow 0} \int_{U_{x}} \Tr \left( {\mathcal T}_{q}(x) {\mathcal Q}^{q}_{\alpha}(t, f(x), x) \right) d vol(x)   \nonumber   \\
& = & \lim_{t \rightarrow 0} \int_{U_{x}} \Tr \left( {\mathcal T}_{q}(x) {\widetilde {\mathcal E}}^{q}(t, f(x), x) \right) d vol(x),
\end{eqnarray}

\noindent
which yields the following equalities.

\begin{eqnarray} \label{E:3.12}
& & \lim_{t \rightarrow 0} \sum_{x \in {\mathcal F}_{0}(f)} \sum_{q= \even}
\int_{U_{x}} \Tr {\mathcal T}_{q}(x) \left( {\mathcal E}^{q}_{{\mathcal P}_{-, {\mathcal L}_{0}}}(t, f(x), x) \right) d vol(x)   \\
& & - \lim_{t \rightarrow 0} \sum_{x \in {\mathcal F}_{0}(f)}
\sum_{q= \odd} \int_{U_{x}} \Tr {\mathcal T}_{q}(x) \left( {\mathcal E}^{q}_{{\mathcal P}_{+, {\mathcal L}_{1}}}(t, f(x), x) \right) d vol(x)  \nonumber  \\
& = &  \lim_{t \rightarrow 0} \sum_{x \in {\mathcal F}_{0}(f)} \sum_{q=0}^{m} (-1)^{q}
\int_{U_{x}} \Tr \left( {\mathcal T}_{q}(x) {\widetilde {\mathcal E}}^{q} (t, f(x), x) \right) d vol(x)
\hspace{0.1 cm} = \hspace{0.1 cm} \sum_{x \in {\mathcal F}_{0}(f)} \Sign \ddet \left( I - df(x) \right),    \nonumber
\end{eqnarray}

\noindent
where we refer to Theorem 1.10.4 in [6] or Theorem 10.12 in [10] for the proof of the last equality.

We next analyze the boundary contribution.
For $\alpha = {\mathcal P}_{-, {\mathcal L}_{0}}$ or ${\mathcal P}_{+, {\mathcal L}_{1}}$, by (\ref{E:3.10}) we have

\begin{eqnarray} \label{E:3.13}
& & \lim_{t \rightarrow 0} \int_{Y} \int_{0}^{\frac{\epsilon}{7}}
\Tr \left( {\mathcal T}_{q}(x) {\mathcal E}^{q}_{\alpha}(t, f(x), x) \right) du \hspace{0.1 cm} d vol(y)
\hspace{0.1 cm} = \hspace{0.1 cm}
\lim_{t \rightarrow 0} \int_{Y} \int_{0}^{\frac{\epsilon}{7}}
\Tr \left( {\mathcal T}_{q}(x) {\mathcal Q}^{q}_{\alpha}(t, f(x), x) \right) du \hspace{0.1 cm} d vol(y)   \nonumber   \\
& = & \lim_{t \rightarrow 0} \int_{Y} \int_{0}^{\frac{\epsilon}{7}}
\Tr \left( {\mathcal T}_{q}(x) {\mathcal E}^{\cyl, q}_{\alpha}(t, f(x), x) \right) du \hspace{0.1 cm}
d vol(y).
\end{eqnarray}

\noindent
We note that on $[0, \frac{\epsilon}{7}) \times Y$, $f$ is assumed to be $f(u, y) = ( c \hspace{0.1 cm} u, B(y))$,
where $B : (Y, g^{Y}) \rightarrow (Y, g^{Y})$ is an isometry.
Let us consider the case of $\alpha = {\mathcal P}_{-, {\mathcal L}_{0}}$.
We can treat the case of $\alpha = {\mathcal P}_{+, {\mathcal L}_{1}}$ in the same way.
Put $x = ( u, y)$ and ${\frak B}_{q}(y) := \Lambda^{q} \left( ( d^{Y}B (y) )^{T} \right)$.
Since ${\mathcal T}_{q} (u, y)  \phi_{q, j} (B(y)) = {\frak B}_{q}(y) \phi_{q, j} (B(y))$, we have

\begin{eqnarray} \label{E:3.14}
& & \lim_{t \rightarrow 0} \int_{Y} \int_{0}^{\frac{\epsilon}{7}}
\sum_{j=1}^{\infty} \frac{e^{-t \lambda_{q, j}}}{\sqrt{4 \pi t}} \left( e^{- \frac{(c - 1)^{2} u^{2}}{4t}} - e^{- \frac{(c + 1)^{2} u^{2}}{4t}} \right)
\langle {\frak B}_{q}(y) \phi_{q, j}(B(y)), \hspace{0.1 cm} \phi_{q, j}(y) \rangle \hspace{0.1 cm} du \hspace{0.1 cm} d vol(y)  \nonumber  \\
& = & \lim_{t \rightarrow 0} \frac{1}{\sqrt{\pi}} \int_{0}^{\frac{\epsilon}{14 \sqrt{t}}}
\left( e^{- (c - 1)^{2} x^{2}} - e^{- (c + 1)^{2} x^{2}} \right) dx \cdot \lim_{t \rightarrow 0} \int_{Y} \sum_{j=1}^{\infty} e^{-t \lambda_{q, j}}
\langle {\frak B}_{q}(y) \phi_{q, j}(B(y)), \phi_{q, j}(y) \rangle d vol(y)  \nonumber  \\
& = & \frac{1}{2}  \left( \frac{1}{| 1 - c |} - \frac{1}{1 + c} \right) \cdot \lim_{t \rightarrow 0}
\Tr \left( B^{\ast} e^{-t \Delta_{Y}^{q}}|_{\Omega^{q}_{-}(Y)} \right),
\end{eqnarray}

\vspace{0.2 cm}

\noindent
where $\langle \hspace{0.1 cm} , \hspace{0.1 cm} \rangle$ is the pointwise inner product of differential forms induced by the metric $g_{Y}$.
Similarly, since ${\mathcal T}_{q}(u, y) \left( du \wedge \phi_{q-1, j} (B(y)) \right) =
c \hspace{0.1 cm} du \wedge \left( {\frak B}(y) \phi_{q-1, j}(B(y)) \right)$, we have

\begin{eqnarray} \label{E:3.15}
& & \lim_{t \rightarrow 0} \hspace{0.1 cm}  \int_{Y} \int_{0}^{\frac{\epsilon}{7}}
\sum_{j=1}^{\infty} \frac{e^{-t \lambda_{q, j}}}{\sqrt{4 \pi t}} \left( e^{- \frac{(c - 1)^{2} u^{2}}{4t}} - e^{- \frac{(c + 1)^{2} u^{2}}{4t}} \right) \times \nonumber  \\
& & \hspace{0.3 cm} \langle c du \wedge {\frak B}_{q-1}(y) \phi_{q-1, j} (B(y)), \hspace{0.1 cm}
 du \wedge \phi_{q-1, j}(y) \rangle \hspace{0.1 cm} du \hspace{0.1 cm} d vol(y)  \nonumber  \\
& = &  \frac{1}{2}  \left( \frac{c}{| 1 - c |} - \frac{c}{1 + c} \right) \cdot \lim_{t \rightarrow 0}
\Tr \left( B^{\ast} e^{-t \Delta_{Y}^{q-1}}|_{\Omega^{q-1}_{-}(Y)} \right).
\end{eqnarray}

\vspace{0.2 cm}

\noindent
Same computation using (\ref{E:3.6}) shows that

\begin{eqnarray} \label{E:3.16}
& & \lim_{t \rightarrow 0} \int_{Y} \int_{0}^{\frac{\epsilon}{7}} \Tr \left( {\mathcal T}_{q}(u, y) {\mathcal E}^{\cyl, q}_{{\mathcal P}_{-, {\mathcal L}_{0}}}
(t, f(u, y), (u, y)) \right) du \hspace{0.1 cm} d vol(y)     \\
& = & \frac{1}{2}  \left( \frac{1}{| 1 - c |} - \frac{1}{1 + c} \right) \cdot \lim_{t \rightarrow 0}
\Tr \left( B^{\ast} e^{-t \Delta_{Y}^{q}}|_{\Omega^{q}_{-}(Y)} \right)
\hspace{0.1 cm} + \hspace{0.1 cm}
\frac{1}{2}  \left( \frac{1}{| 1 - c |} + \frac{1}{1 + c} \right) \cdot \lim_{t \rightarrow 0}
\Tr \left( B^{\ast} e^{-t \Delta_{Y}^{q}}|_{\Omega^{q}_{+}(Y)} \right)  \nonumber  \\
& + & \frac{1}{2}  \left( \frac{c}{| 1 - c |} - \frac{c}{1 + c} \right) \cdot \lim_{t \rightarrow 0}
\Tr \left( B^{\ast} e^{-t \Delta_{Y}^{q-1}}|_{\Omega^{q-1}_{-}(Y)} \right)  \nonumber  \\
& + & \frac{1}{2}  \left( \frac{c}{| 1 - c |} + \frac{c}{1 + c} \right) \cdot \lim_{t \rightarrow 0}
\Tr \left( B^{\ast} e^{-t \Delta_{Y}^{q-1}}|_{\Omega^{q-1}_{+}(Y)} \right)  \nonumber  \\
& = & \frac{1}{2 | 1 - c |} \cdot \lim_{t \rightarrow 0} \Tr \left( B^{\ast} e^{-t \Delta_{Y}^{q}} \right)
\hspace{0.1 cm} + \hspace{0.1 cm} \frac{c}{2 | 1 - c |} \cdot \lim_{t \rightarrow 0} \Tr \left( B^{\ast} e^{-t \Delta_{Y}^{q-1}} \right) \nonumber  \\
&  & \hspace{0.1 cm} + \hspace{0.1 cm}
\frac{1}{2 (1 + c)} \cdot \lim_{t \rightarrow 0} \left( \Tr \left( B^{\ast} e^{-t \Delta_{Y}^{q}}|_{\Omega^{q}_{+}(Y)} \right) \hspace{0.1 cm} - \hspace{0.1 cm}
\Tr \left( B^{\ast} e^{-t \Delta_{Y}^{q}}|_{\Omega^{q}_{-}(Y)} \right) \right)  \nonumber  \\
&  & \hspace{0.1 cm} + \hspace{0.1 cm}
\frac{c}{2 (1 + c)} \cdot \lim_{t \rightarrow 0} \left( \Tr \left( B^{\ast} e^{-t \Delta_{Y}^{q-1}}|_{\Omega^{q-1}_{+}(Y)} \right) \hspace{0.1 cm} - \hspace{0.1 cm}
\Tr \left( B^{\ast} e^{-t \Delta_{Y}^{q-1}}|_{\Omega^{q-1}_{-}(Y)} \right) \right) .  \nonumber
\end{eqnarray}

\noindent
Similarly, using (\ref{E:3.7}), we have

\begin{eqnarray} \label{E:3.17}
& & \lim_{t \rightarrow 0} \int_{Y} \int_{0}^{\frac{\epsilon}{7}} \Tr \left( {\mathcal T}_{q}(u, y) {\mathcal E}^{\cyl, q}_{{\mathcal P}_{+, {\mathcal L}_{1}}}
(t, f(u, y), (u, y)) \right) du \hspace{0.1 cm} d vol(y)     \\
& = & \frac{1}{2 | 1 - c |} \cdot \lim_{t \rightarrow 0} \Tr \left( B^{\ast} e^{-t \Delta_{Y}^{q}} \right)
\hspace{0.1 cm} + \hspace{0.1 cm} \frac{c}{2 | 1 - c |} \cdot \lim_{t \rightarrow 0} \Tr \left( B^{\ast} e^{-t \Delta_{Y}^{q-1}} \right) \nonumber  \\
&  & \hspace{0.1 cm} - \hspace{0.1 cm}
\frac{1}{2 (1 + c)} \cdot \lim_{t \rightarrow 0} \left( \Tr \left( B^{\ast} e^{-t \Delta_{Y}^{q}}|_{\Omega^{q}_{+}(Y)} \right) \hspace{0.1 cm} - \hspace{0.1 cm}
\Tr \left( B^{\ast} e^{-t \Delta_{Y}^{q}}|_{\Omega^{q}_{-}(Y)} \right) \right)  \nonumber  \\
&  & \hspace{0.1 cm} - \hspace{0.1 cm}
\frac{c}{2 (1 + c)} \cdot \lim_{t \rightarrow 0} \left( \Tr \left( B^{\ast} e^{-t \Delta_{Y}^{q-1}}|_{\Omega^{q-1}_{+}(Y)} \right) \hspace{0.1 cm} - \hspace{0.1 cm}
\Tr \left( B^{\ast} e^{-t \Delta_{Y}^{q-1}}|_{\Omega^{q-1}_{-}(Y)} \right) \right) .  \nonumber
\end{eqnarray}

\vspace{0.2 cm}

\noindent
Finally, combining (\ref{E:3.16}) and (\ref{E:3.17}), we have

\begin{eqnarray} \label{E:3.18}
& & \lim_{t \rightarrow 0} \sum_{q = \even}
\int_{Y} \int_{0}^{\frac{\epsilon}{7}} \Tr \left( {\mathcal T}_{q}(u, y) {\mathcal E}^{\cyl, q}_{{\mathcal P}_{-, {\mathcal L}_{0}}}
(t, f(u, y), (u, y)) \right) du \hspace{0.1 cm} d vol(y) \nonumber  \\
& & \hspace{0.5 cm} - \hspace{0.1 cm} \lim_{t \rightarrow 0} \sum_{q = \odd}
\int_{Y} \int_{0}^{\frac{\epsilon}{7}} \Tr \left( {\mathcal T}_{q}(u, y) {\mathcal E}^{\cyl, q}_{{\mathcal P}_{+, {\mathcal L}_{1}}}
(t, f(u, y), (u, y)) \right) du \hspace{0.1 cm} d vol(y)  \nonumber  \\
& = & \frac{1-c}{2 | 1 - c | } \cdot \lim_{t \rightarrow 0} \sum_{q=0}^{m-1} (-1)^{q} \Tr \left( B^{\ast} e^{-t \Delta_{Y}^{q}} \right)  \nonumber  \\
& & \hspace{0.5 cm} + \hspace{0.1 cm} \lim_{t \rightarrow 0} \frac{1}{2} \sum_{q=0}^{m-1}
\left( \Tr \left( B^{\ast} e^{-t \Delta_{Y}^{q}}|_{\Omega^{q}_{+}(Y)} \right) \hspace{0.1 cm} - \hspace{0.1 cm}
\Tr \left( B^{\ast} e^{-t \Delta_{Y}^{q}}|_{\Omega^{q}_{-}(Y)} \right) \right) .
\end{eqnarray}

\noindent
Using (\ref{E:3.5}) and the following commutative diagram

\begin{eqnarray*}
\begin{CD}
  \Imm (d^{Y})^{\ast} \cap \Omega^{q}(Y)      & @> d^{Y} >>  & \Imm d^{Y} \cap \Omega^{q+1}(Y) \\
                     @V{B^{\ast} e^{-t \Delta_{Y}}}VV  \circlearrowright      &      & @VV {B^{\ast} e^{-t \Delta_{Y}}} V \\
  \Imm (d^{Y})^{\ast} \cap \Omega^{q}(Y)     & @> d^{Y} >>  &  \Imm d^{Y} \cap \Omega^{q+1}(Y)
\end{CD}
\end{eqnarray*}

\noindent
with the fact that $\Sign \ddet ( I - df(y) ) = \Sign ( 1 - c ) \cdot \Sign \ddet ( I - df_Y(y) )$,
we can rewrite (\ref{E:3.18}) by

\begin{eqnarray}  \label{E:3.19}
(\ref{E:3.18}) & = & \frac{1}{2} \sum_{y \in {\mathcal F}_{Y}(f)} \Sign \ddet ( I - df(y) )
+ \frac{1}{2} \left\{ \Tr \left( B^{\ast} : \left( \star_{Y} {\mathcal K} \right) \rightarrow \left( \star_{Y} {\mathcal K} \right) \right) -
\Tr \left( B^{\ast} : {\mathcal K} \rightarrow {\mathcal K} \right) \right\}.  \nonumber
\end{eqnarray}

\noindent
Furthermore, $\frac{1}{2} \left\{ \Tr \left( B^{\ast} : \left( \star_{Y} {\mathcal K} \right) \rightarrow \left( \star_{Y} {\mathcal K} \right) \right) -
\Tr \left( B^{\ast} : {\mathcal K} \rightarrow {\mathcal K} \right) \right\}$ ie equal to $0$
if $B : (Y, g^{Y}) \rightarrow (Y, g^{Y})$ is orientation preserving and is equal to
$\hspace{0.1 cm} - \Tr \left( B^{\ast} : {\mathcal K} \rightarrow {\mathcal K} \right)$ if $B$ is orientation reversing.
We can compute $L_{\widetilde{\mathcal P}_{1}}(f)$ in the same way.
Summarizing the above arguments with Lemma \ref{Lemma:2.2}, we have the following result, which is the main result of this paper.

\begin{theorem}  \label{Theorem:3.3}
Let $(M, Y, g^{M})$ be an $m$-dimensional compact oriented Riemannian manifold with boundary $Y$ and
$g^{M}$ be assumed to be a product metric near $Y$.
Suppose that $f : M \rightarrow M$ is a smooth map having only simple fixed points and satisfying the condition A.
Then the following equalities hold.

\begin{eqnarray*}
& (1) & \sum_{q= \even} \Tr \left( f^{\ast} : H^{q}(M, Y) \rightarrow H^{q}(M, Y) \right) -
\sum_{q= \odd} \Tr \left( f^{\ast} : H^{q}(M) \rightarrow H^{q}(M) \right)   \\
&  & \hspace{0.5 cm} = \hspace{0.1 cm}
\sum_{x \in {\mathcal F}_{0}(f)} \Sign \ddet ( I - df (x) ) + \frac{1}{2} \sum_{y \in {\mathcal F}_{Y}(f)} \Sign \ddet ( I - df(y) )
\hspace{0.1 cm} - \hspace{0.1 cm} K_{0} \\
& (2) & \sum_{q= \even} \Tr \left( f^{\ast} : H^{q}(M) \rightarrow H^{q}(M) \right) -
\sum_{q= \odd} \Tr \left( f^{\ast} : H^{q}(M, Y) \rightarrow H^{q}(M, Y) \right)   \\
&  &  \hspace{0.5 cm} = \hspace{0.1 cm}
\sum_{x \in {\mathcal F}_{0}(f)} \Sign \ddet ( I - df (x) ) + \frac{1}{2} \sum_{y \in {\mathcal F}_{Y}(f)} \Sign \ddet ( I - df(y) )
\hspace{0.1 cm} + \hspace{0.1 cm} K_{0},
\end{eqnarray*}

\noindent
where $K_{0} = 0$ if $B$ is orientation preserving and
$K_{0} = \Tr \left( B^{\ast} : \Imm \iota^{\ast} \rightarrow \Imm \iota^{\ast} \right)$ with $\iota^{\ast} : H^{\bullet}(M) \rightarrow H^{\bullet}(Y)$
if $B$ is orientation reversing.
\end{theorem}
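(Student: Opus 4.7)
The plan is to apply the heat kernel method for the Lefschetz fixed point formula, separately to each of the two complexes, then package the answers symmetrically. Starting from the McKean--Singer identity using Lemma \ref{Lemma:2.3}, I write $L_{\widetilde{\mathcal P}_{i}}(f)$ as the $t \to 0$ limit of the alternating supertrace $\sum (-1)^{q} \Tr(f^{\ast} e^{-t \Delta^{q}_{\widetilde{\mathcal P}_{i}}})$. Because the diagonal $\{(x,x)\}$ meets the graph of $f$ only at $\mathcal F(f)$, the standard off-diagonal Gaussian decay of the heat kernel on $\widetilde M$ forces contributions from any region bounded away from $\mathcal F(f)$ to vanish in the limit. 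Thus only a disjoint union of small balls $U_{x}$ around interior fixed points and a thin collar $[0,\tfrac{\epsilon}{7}) \times Y$ containing the boundary fixed points can contribute.

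The next step is to replace the unknown heat kernels ${\mathcal E}^{q}_{{\mathcal P}_{\pm,\mathcal L_{0/1}}}$ by the parametrix ${\mathcal Q}^{q}_{\alpha}$ built from the double-manifold kernel ${\widetilde {\mathcal E}}^{q}$ on the interior piece and the cylinder kernels ${\mathcal E}^{\cyl,q}_{\alpha}$ on the boundary piece. The usual Duhamel/Levi argument gives the exponentially small error in (\ref{E:3.9}), which is killed by the $t\to 0$ limit. On the balls $U_{x}$ the contribution of ${\widetilde {\mathcal E}}^{q}$ telescopes across degrees to the classical Atiyah--Bott local index $\Sign\ddet(I-df(x))$, as in Theorem 1.10.4 of [6]; this handles the first sum in both formulas of the theorem uniformly, because interior contributions are the same regardless of which boundary projection is imposed.

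The genuine work is the boundary piece. Using the Condition A form $f(u,y)=(cu,B(y))$ and the explicit expansions (\ref{E:3.6})--(\ref{E:3.7}), the $u$-integral over $[0,\tfrac{\epsilon}{7})$ decouples from the tangential trace: the Gaussians $e^{-(c\mp 1)^{2}u^{2}/4t}/\sqrt{4\pi t}$ integrate in the limit to $\tfrac{1}{2|1-c|}$ and $\tfrac{1}{2(1+c)}$ respectively, while the tangential sums collapse at $t\to 0$ to traces of $B^{\ast}$ on $\Omega^{q}(Y)$ or on the $\pm$-summands $\Omega^{q}_{\pm}(Y)$. Carefully tracking the $\pm$ signs in (\ref{E:3.6}) versus (\ref{E:3.7}) and the factor $c$ coming from $du\wedge$, I combine the even ${\mathcal P}_{-,\mathcal L_{0}}$ and odd ${\mathcal P}_{+,\mathcal L_{1}}$ contributions for $L_{\widetilde{\mathcal P}_{0}}(f)$ to obtain formula (\ref{E:3.18}): a universal term $\tfrac{1-c}{2|1-c|}\sum (-1)^{q}\Tr(B^{\ast}e^{-t\Delta^{q}_{Y}})$ plus a correction living purely on the finite-dimensional harmonic space. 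The key cancellation is that the pieces on $\Imm d^{Y}$ and $\Imm (d^{Y})^{\ast}$ cancel under the alternating sum via the commutative diagram intertwining $d^{Y}$ and $B^{\ast}e^{-t\Delta_{Y}}$, leaving only the $\mathcal K \oplus \star_{Y}\mathcal K$ part.

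To finish, I identify the universal term with the boundary-fixed-point sum by observing that $\sum (-1)^{q}\Tr(B^{\ast}e^{-t\Delta^{q}_{Y}})$ limits to the ordinary Lefschetz number of $B$ on $Y$, and that under Condition A $\Sign\ddet(I-df(y))=\Sign(1-c)\cdot\Sign\ddet(I-df_{Y}(y))$, so the $\tfrac{1-c}{2|1-c|}$ factor gives the $\tfrac{1}{2}\sum_{y\in\mathcal F_{Y}(f)}\Sign\ddet(I-df(y))$ claimed. The correction $\tfrac12\{\Tr B^{\ast}|_{\star_{Y}\mathcal K}-\Tr B^{\ast}|_{\mathcal K}\}$ vanishes when $B$ is orientation preserving (because $\star_{Y}$ then intertwines $B^{\ast}|_{\mathcal K}$ with $B^{\ast}|_{\star_{Y}\mathcal K}$), and equals $-\Tr(B^{\ast}|_{\mathcal K})$ when $B$ is orientation reversing (since then $\star_{Y}$ changes sign, so the two traces are negatives of each other); by Lemma \ref{Lemma:2.2} this is $-\Tr(B^{\ast}:\Imm\iota^{\ast}\to\Imm\iota^{\ast})=-K_{0}$. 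Running the identical computation for $\widetilde{\mathcal P}_{1}$ swaps the roles of ${\mathcal P}_{-,\mathcal L_{0}}$ and ${\mathcal P}_{+,\mathcal L_{1}}$, which flips the sign of the correction term, yielding $+K_{0}$ in (2). The main obstacle will be bookkeeping the eight terms in (\ref{E:3.16})--(\ref{E:3.17}) without sign errors and verifying that the $\Imm d^{Y}\oplus \Imm (d^{Y})^{\ast}$ portions really do cancel after the alternating sum; everything else is standard heat-kernel asymptotics.
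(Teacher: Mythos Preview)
Your proposal is correct and follows essentially the same route as the paper: the McKean--Singer identity and localization yield (\ref{E:3.1})--(\ref{E:3.3}), the parametrix construction and interior contribution are handled exactly as in (\ref{E:3.8})--(\ref{E:3.12}), and your boundary analysis via the explicit cylinder kernels, the commutative diagram cancellation on $\Imm d^{Y}\oplus \Imm (d^{Y})^{\ast}$, and the orientation dichotomy for $K_{0}$ reproduce (\ref{E:3.13})--(\ref{E:3.18}) and the paragraph following. One small point of wording: the correction term in (\ref{E:3.18}) is a \emph{non}-alternating sum over $q$, and it is there (not in an alternating sum) that the commutative diagram forces the exact/coexact pieces to cancel pairwise between consecutive degrees---but you have the mechanism right.
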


\vspace{0.2 cm}

\noindent
Combining this result with (\ref{E:1.2}), we have the following result.

\begin{corollary}  \label{Corollary:3.5}
We assume the same assumptions as in Theorem \ref{Theorem:3.3}. Then :

\begin{eqnarray*}
& (1) & \sum_{q= \even} \Tr \left( f^{\ast} : H^{q}(M) \rightarrow H^{q}(M) \right) -
\sum_{q= \even} \Tr \left( f^{\ast} : H^{q}(M, Y) \rightarrow H^{q}(M, Y) \right)   \\
&  & \hspace{0.5 cm} = \hspace{0.1 cm}
\frac{1}{2} \sum_{y \in {\mathcal F}^{+}_{Y}(f)} \Sign \ddet ( I - df (y) ) - \frac{1}{2} \sum_{y \in {\mathcal F}^{-}_{Y}(f)} \Sign \ddet ( I - df(y) )
\hspace{0.1 cm} + \hspace{0.1 cm} K_{0}, \\
& (2) & \sum_{q= \odd} \Tr \left( f^{\ast} : H^{q}(M) \rightarrow H^{q}(M) \right) -
\sum_{q= \odd} \Tr \left( f^{\ast} : H^{q}(M, Y) \rightarrow H^{q}(M, Y) \right)   \\
&  & \hspace{0.5 cm} = \hspace{0.1 cm}
- \frac{1}{2} \sum_{y \in {\mathcal F}^{+}_{Y}(f)} \Sign \ddet ( I - df (y) ) + \frac{1}{2} \sum_{y \in {\mathcal F}^{-}_{Y}(f)} \Sign \ddet ( I - df(y) )
\hspace{0.1 cm} + \hspace{0.1 cm} K_{0},
\end{eqnarray*}

\noindent
where either ${\mathcal F}^{+}_{Y}(f) = \emptyset$ or ${\mathcal F}^{-}_{Y}(f) = \emptyset$, depending on $c$ in the Condition A.
\end{corollary}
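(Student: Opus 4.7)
The plan is purely algebraic: combine Theorem \ref{Theorem:3.3} with the Brenner--Shubin identities (\ref{E:1.2}) by subtracting appropriately paired equations. There is no new analysis to do, so the proof should amount to bookkeeping together with one final observation about the Condition A.

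To fix notation, I would set
\begin{eqnarray*}
A &:=& \sum_{q=\even}\Tr(f^{\ast}:H^{q}(M)\to H^{q}(M)), \qquad
B := \sum_{q=\odd}\Tr(f^{\ast}:H^{q}(M)\to H^{q}(M)), \\
A' &:=& \sum_{q=\even}\Tr(f^{\ast}:H^{q}(M,Y)\to H^{q}(M,Y)), \quad
B' := \sum_{q=\odd}\Tr(f^{\ast}:H^{q}(M,Y)\to H^{q}(M,Y)),
\end{eqnarray*}
together with
$S_{0}:=\sum_{x\in\mathcal{F}_{0}(f)}\Sign\ddet(I-df(x))$,
$S^{\pm}:=\sum_{y\in\mathcal{F}^{\pm}_{Y}(f)}\Sign\ddet(I-df(y))$,
so that $\sum_{y\in\mathcal{F}_{Y}(f)}\Sign\ddet(I-df(y))=S^{+}+S^{-}$.
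Then (\ref{E:1.2}) reads $A-B=S_{0}+S^{+}$ and $A'-B'=S_{0}+S^{-}$, while Theorem \ref{Theorem:3.3} says $A'-B=S_{0}+\tfrac12(S^{+}+S^{-})-K_{0}$ and $A-B'=S_{0}+\tfrac12(S^{+}+S^{-})+K_{0}$.

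Next I would extract Corollary \ref{Corollary:3.5}(1) by subtracting the second Brenner--Shubin identity from the second identity in Theorem \ref{Theorem:3.3}:
\[
(A-B')-(A'-B')=A-A'=\bigl[S_{0}+\tfrac12(S^{+}+S^{-})+K_{0}\bigr]-\bigl[S_{0}+S^{-}\bigr]=\tfrac12 S^{+}-\tfrac12 S^{-}+K_{0},
\]
which is exactly assertion (1). For assertion (2), I would first solve for $B$ and $B'$ from the Brenner--Shubin identities, $B=A-S_{0}-S^{+}$ and $B'=A'-S_{0}-S^{-}$, and then plug in the expression just obtained for $A-A'$:
\[
B-B'=(A-A')-S^{+}+S^{-}=\bigl(\tfrac12 S^{+}-\tfrac12 S^{-}+K_{0}\bigr)-S^{+}+S^{-}=-\tfrac12 S^{+}+\tfrac12 S^{-}+K_{0},
\]
which is assertion (2). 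As a sanity check, adding the two assertions reproduces the relation $(A-B')+(B-A')=2K_{0}$ implicit in Theorem \ref{Theorem:3.3}.

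Finally, the parenthetical statement that either $\mathcal{F}^{+}_{Y}(f)=\emptyset$ or $\mathcal{F}^{-}_{Y}(f)=\emptyset$ follows immediately from the Remark after Definition \ref{Definition:3.1}: under Condition A the normal derivative at every boundary fixed point equals the single constant $c$, so all boundary fixed points are simultaneously attracting (if $0<c<1$) or simultaneously repelling (if $c>1$). There is no real obstacle here; the only thing worth being careful about is keeping the signs and the parity assignments in Theorem \ref{Theorem:3.3}(1)--(2) straight when matching them to the Brenner--Shubin identities for $H^{\bullet}(M)$ versus $H^{\bullet}(M,Y)$.
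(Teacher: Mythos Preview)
Your proposal is correct and follows exactly the approach the paper indicates: the paper simply says ``Combining this result with (\ref{E:1.2}), we have the following result'' and states the corollary without further argument, so your explicit bookkeeping with $A,B,A',B',S_{0},S^{\pm}$ is precisely the intended (and only reasonable) derivation. The final observation about $\mathcal{F}^{+}_{Y}(f)$ or $\mathcal{F}^{-}_{Y}(f)$ being empty is likewise handled correctly via the Remark after Definition~\ref{Definition:3.1}.
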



\end{document}